\documentclass[12pt]{amsart}
\usepackage{hyperref}
\hypersetup{nesting=true,debug=true,naturalnames=true}

\usepackage[english]{babel}

\usepackage{amsmath,amssymb,amsfonts}
\usepackage{amscd, amssymb, latexsym, amsmath, amscd}

\usepackage{anysize}
\marginsize{3cm}{3cm}{3cm}{3cm}

\usepackage{cleveref}
\usepackage{tikz}
\usetikzlibrary{decorations.pathreplacing, patterns}
\usepackage{subcaption}

\newtheorem{theorem}{Theorem}[section]
\newtheorem{lemma}[theorem]{Lemma}

\newtheorem{corollary}[theorem]{Corollary}
\newtheorem{proposition}[theorem]{Proposition}
\newtheorem{remark}[theorem]{Remark}

\newcommand{\Z}{\mathbb{Z}}

\newcommand{\C}{\mathbb{C}}

\def\SL{{\rm SL}}
\def\GL{{\rm GL}}
\def\Sym{{\rm Sym}}

\author{Dibyendu Biswas}
\address{
	Indian Institute of Technology, Kanpur, India, 208016}
\email{dibubis@gmail.com, dibyendub@iitk.ac.in}

\keywords{General linear groups,  Regular unipotent element, Plethysm}

\subjclass[2010]{ 22E46, 20G05, 05E10.}

\begin{document}

\title[Image of regular unipotents]
{Image of regular unipotent under a representation of $\GL_3(\C)$}

\begin{abstract}
We study the image of a regular unipotent element under any finite-dimensional irreducible polynomial representations of $\GL_3(\C)$. This problem is equivalent to decomposing certain compositions of irreducible representations as $\mathrm{SL}_2(\mathbb{C})$-modules. We give an explicit decomposition of this finding, its Jordan decomposition.
\end{abstract}

\maketitle  
 {\hfill \today} 

\section{Introduction}
	Let \(G\) be a connected reductive algebraic group over \(\mathbb{C}\). An element \(u \in G\) is called \emph{unipotent} if \(\rho(u)\) is unipotent for some (equivalently, any faithful) finite-dimensional rational representation \(\rho:G\to\GL(V)\).

 Let
   $\pi_{\lambda}:G\to \GL(V)$ be a finite-dimensional polynomial representation of $G$ with highest weight $\lambda$. For any unipotent element $u\in G$, the image $\pi_{\lambda}(u)$ is unipotent.
     It is natural to ask: what is the unipotent class of $\pi_{\lambda}(u)$ in $\GL(V)$, or
     equivalently, what is the Jordan canonical form of $\pi_{\lambda}(u)$?

    In this paper, we 
   will only consider the case of  $u$,  a \textit{regular unipotent} element. See \cite{steinberg-ihes} for more details on regular unipotent elements.

For studying this problem, we use  principal $\mathrm{SL}_2$ inside $G$.  There exists a homomorphism $\psi: \mathrm{SL}_2(\mathbb{C}) \rightarrow \mathrm{G}$
which takes a regular unipotent in $\mathrm{SL}_2(\mathbb{C})$ to a regular unipotent element in G . Such a homomorphism is unique up to conjugacy, and is called the principal $\mathrm{SL}_2$ in G . Therefore determining the Jordan form of $\pi_\lambda(u)$ amounts to understanding the restriction of $\pi_\lambda$ to this principal $\mathrm{SL}_2$. Note that for $G=\GL_n(\C)$, principal homomorphism
is precisely $\operatorname{Sym}^{n-1}:\mathrm{SL}_2 (\C)\longrightarrow \mathrm{GL}_n( \mathbb C)$ which takes a regular unipotent in $\mathrm{SL}_2(\mathbb{C})$ to a regular unipotent element in $\mathrm{GL}_n( \mathbb C)$.

 We have explicit character formula of this restricted representation due to J.-P. Serre \cite[Theorem~2.8]{serre2025zeroscaracteres} as follows for which we follow the notation of \cite{dptorsion-unipotent}.
\begin{theorem}\cite{dptorsion-unipotent}\label{thm dp reductive char}
	Let G be a connected reductive group, with a maximal torus $T$ and let $\lambda \in X^*(\mathrm{~T})$ be a dominant weight and let  $\pi_\lambda$ be the irreducible representation of G with highest weight $\lambda$. Let $\Theta_\lambda$ be the character of $\pi_\lambda$, and $\Theta_\lambda(z)$ is the character at the diagonal element $\left(z, z^{-1}\right)$ of $\mathrm{SL}_2(\mathbb{C})$ treated as an element of G via principal $\mathrm{SL}_2$ embedding. Then we have
	$$
	 	\Theta_\lambda(z):=\mathrm{ch} \left(\pi_\lambda|_{\widetilde{\mathrm{SL}_2}}\right)=z^{-2\left\langle\lambda, \rho^{\vee}\right\rangle} \frac{\prod_{\alpha \in \Phi^{+}}\left(1-z^{2\left\langle\lambda+\rho, \alpha^{\vee}\right\rangle}\right)}{\prod_{\alpha \in \Phi^{+}}\left(1-z^{2\left\langle\rho, \alpha^{\vee}\right\rangle}\right)}.
	$$
\end{theorem}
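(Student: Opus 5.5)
Our plan is to derive the formula from the Weyl character formula, evaluated along the one-parameter subgroup $z\mapsto\psi(\mathrm{diag}(z,z^{-1}))$, which is $2\rho^\vee$ viewed as a cocharacter of $T$. For a principal $\SL_2$ we may choose $\psi$ so that the diagonal torus of $\SL_2$ maps into $T$, with $\psi(\mathrm{diag}(z,z^{-1}))=(2\rho^\vee)(z)$. This uses the standard fact that the $h$-element of a principal $\mathfrak{sl}_2$-triple is $2\rho^\vee$, i.e.\ $\langle\alpha,2\rho^\vee\rangle=2$ for every simple root $\alpha$.

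For $\GL_n$ this can be checked directly from $\Sym^{n-1}$: the image is $\mathrm{diag}(z^{n-1},z^{n-3},\dots,z^{1-n})$. A weight $\mu$ therefore contributes $z^{\langle\mu,2\rho^\vee\rangle}$, and
$$\Theta_\lambda(z)=\frac{\sum_{w\in W}\varepsilon(w)\,z^{2\langle w(\lambda+\rho),\rho^\vee\rangle}}{\sum_{w\in W}\varepsilon(w)\,z^{2\langle w\rho,\rho^\vee\rangle}}.$$

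This naive substitution gives $0/0$, since $2\rho^\vee$ is singular for the dual pairing in the wrong variable. The key step, and the main obstacle, is to evaluate the numerator. We swap roles using the symmetry $\langle w\mu,\rho^\vee\rangle=\langle\mu,w^{-1}\rho^\vee\rangle$, which gives
$$\sum_w\varepsilon(w)\,z^{2\langle \lambda+\rho,\,w\rho^\vee\rangle}.$$
This is the Weyl denominator of the dual root system evaluated at the element determined by $\lambda+\rho$. We then apply the Weyl denominator identity for $\Phi^\vee$ (with $\rho^\vee$ the half-sum of positive coroots):
$$\sum_w\varepsilon(w)e^{w\rho^\vee}=e^{\rho^\vee}\prod_{\alpha\in\Phi^+}\bigl(1-e^{-\alpha^\vee}\bigr).$$
Pairing with $2(\lambda+\rho)$ yields
$$z^{2\langle\lambda+\rho,\rho^\vee\rangle}\prod_{\alpha>0}\bigl(1-z^{-2\langle\lambda+\rho,\alpha^\vee\rangle}\bigr).$$
This is a genuine product and is nonzero because $\lambda+\rho$ is regular dominant. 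For the denominator we take the case $\lambda=0$, which gives $z^{2\langle\rho,\rho^\vee\rangle}\prod_{\alpha>0}(1-z^{-2\langle\rho,\alpha^\vee\rangle})$.

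Next we divide the two expressions and replace $z$ by $z^{-1}$. This replacement is legitimate because the character of an $\SL_2$-representation is invariant under $z\mapsto z^{-1}$ (conjugation by the Weyl element). The prefactor becomes $z^{-2\langle\lambda,\rho^\vee\rangle}$ and the products take the stated form.

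If $G$ is reductive rather than semisimple, the derived group contains the principal $\SL_2$ and the centre acts by scalars, so the same computation applies on $T\cap[G,G]$. The pairings involve only coroots, so they are unaffected. For $\GL_n$ the coroot pairings $\langle\lambda+\rho,\alpha^\vee\rangle$ are the hook-type differences $\lambda_i-\lambda_j+j-i$. This recovers the classical $q$-analogue of the Weyl dimension formula (principal specialization), which gives an independent check.
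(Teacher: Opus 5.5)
The paper does not prove this statement. It quotes it from the cited references (going back to Serre) and uses it as a black box, so there is no internal argument to compare against. Your argument is the standard principal-specialization proof, and it is correct. The steps are:
\begin{enumerate}
\item Restrict along the cocharacter $2\rho^\vee$.
\item Use $\langle w(\lambda+\rho),\rho^\vee\rangle=\langle\lambda+\rho,w^{-1}\rho^\vee\rangle$ to rewrite the numerator as the Weyl denominator of $\Phi^\vee$. This implicitly uses that $W$ acts on cocharacters as the Weyl group of $\Phi^\vee$ with the same sign character, and that the coroots $\alpha^\vee$, $\alpha\in\Phi^+$, form a positive system with half-sum $\rho^\vee$.
\item Factor it, divide by the $\lambda=0$ case, and apply $z\mapsto z^{-1}$.
\end{enumerate}
This gives exactly the stated formula, including the prefactor $z^{-2\langle\lambda,\rho^\vee\rangle}$.

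One claim in your write-up is false, although nothing afterwards depends on it: the direct substitution does \emph{not} give $0/0$. Since $\langle\alpha,2\rho^\vee\rangle=2\,\mathrm{ht}(\alpha)\neq 0$ for every root, $(2\rho^\vee)(z)$ is a regular element of $T$ whenever $z$ is not a root of unity. The Weyl denominator there is $\prod_{\alpha>0}\bigl(z^{\mathrm{ht}(\alpha)}-z^{-\mathrm{ht}(\alpha)}\bigr)$, a nonzero Laurent polynomial, so you may divide directly. The indeterminate form only occurs at $z=1$, where the ordinary dimension formula needs a limit.

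The real obstacle your dual-system trick removes is different. The alternating sum $\sum_{w}\varepsilon(w)e^{w(\lambda+\rho)}$ has no product expansion in $\Phi$ for $\lambda\neq 0$. After specialization along $2\rho^\vee$, however, it becomes a Weyl denominator for $\Phi^\vee$ and therefore factors. You should replace the $0/0$ sentence with this description of the difficulty.
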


Therefore, we decompose this character as an direct sum of $\SL_2(\C)$ characters, which will answer of our problem of finding Jordan normal form of $\pi_{\lambda}(u)$. Recall that the finite-dimensional irreducible representations of $\mathrm{SL}_2(\C)$ are precisely the symmetric powers $\mathrm{Sym}^j:=\mathrm{Sym}^j(\C^2)$, of dimension $j+1$, of the standard representation of $\SL_2$ for $j\ge 0$. The character of $\mathrm{Sym}^j$ of $\mathrm{SL}_2(\mathbb{C})$, evaluated at the diagonal element $\mathrm{diag}(z, z^{-1})$, is given by $\mathrm{ch} \bigl(\mathrm{Sym}^j\bigr)=z^j + z^{j-2} + z^{j-4} + \cdots + z^{-(j-2)} + z^{-j}$.
Hence if 
\begin{equation*}
 	\Theta_\lambda(z)= \sum_{i} m_i \; \mathrm{ch} \bigl(\mathrm{Sym}^{n_i}\bigr),
\end{equation*}
as $\SL_2(\C)$ characters, then the associated partition is $\mathfrak p=((n_1+1)^{m_1} , (n_2+1)^{m_2} , \ldots)$ of $\dim \pi_\lambda$ gives precisely the Jordan type of $\pi_\lambda(u)$.

In this paper we explicitly compute the decomposition for $G=\GL_3(\C)$ in \Cref{thm for gl3}.

\Cref{thm for gl3} is also of independent interest from the viewpoint of algebraic combinatorics, as it gives the decomposition of the plethysm of symmetric functions $s_{\lambda}[h_2](x,y)$, where $\ell(\lambda)\leq 3$.

\section{Preliminaries}
  Composing a representation $\pi_\lambda$ with the principal $\mathrm{SL}_2(\C)$ in $\mathrm{GL}_3(\C)$ gives us:
  \[
  \begin{array}{ccccc}
  	\mathrm{SL}_2(\C) & \xrightarrow{\ \operatorname{Sym}^{2}\ } & \mathrm{GL}_3(\mathbb C) & \xrightarrow{\ \pi_\lambda\ } & \mathrm{GL}\bigl(\mathbb S_\lambda(\mathbb C^3)\bigr) \\[0.4ex]
  	\begin{pmatrix}1&1\\0&1\end{pmatrix} & \longmapsto & u & \longmapsto & \pi_\lambda(u).
  \end{array}
  \]
  Thus $\pi_\lambda(u)$ is the image of a regular unipotent element of $\mathrm{SL}_2(\C)$ under this composition, and the problem reduces to decomposing $\pi_\lambda\bigl(\operatorname{Sym}^{2}\mathbb C^2\bigr)$ as an $\mathrm{SL}_2(\C)$-module for which
we use the character formula for the representation
  $\pi_\lambda\bigl(\operatorname{Sym}^{2}\mathbb C^2\bigr),$
  established in \cite[Theorem~3.1]{dptorsion-unipotent}. 
  
\subsection{Lemma} 
 We use the following equivalent formulation of $\Theta_\lambda(z)$ given in \Cref{thm dp reductive char}.
\begin{proposition}\label{prop dp char} The character given in \Cref{thm dp reductive char} can be written as 
	\begin{equation*}
		\Theta_\lambda(z)= \prod_{\alpha \in \Phi^{+}} \frac{\left(z^{\left\langle\lambda+\rho, \alpha^{\vee}\right\rangle}-z^{\left\langle \lambda+\rho, \alpha^{\vee}\right\rangle}\right)}{\left(z^{\left\langle\rho, \alpha^{\vee}\right\rangle}-z^{\left\langle \rho, \alpha^{\vee}\right\rangle}\right)};
	\end{equation*}
\end{proposition}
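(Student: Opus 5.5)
We read the statement with the evident intended exponents: the factor attached to $\alpha\in\Phi^+$ is
\[
\frac{z^{\langle\lambda+\rho,\alpha^\vee\rangle}-z^{-\langle\lambda+\rho,\alpha^\vee\rangle}}{z^{\langle\rho,\alpha^\vee\rangle}-z^{-\langle\rho,\alpha^\vee\rangle}},
\]
since, as printed, each numerator is $0$. The plan is to start from the formula in \Cref{thm dp reductive char} and rewrite it factor by factor as a product of ``$q$-integer'' ratios. The only nontrivial input is that the half-sum $\rho^\vee$ of the positive coroots satisfies $\sum_{\alpha\in\Phi^+}\alpha^\vee=2\rho^\vee$.

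\textbf{Step 1: rewrite each factor.} Fix $\alpha\in\Phi^+$ and put $a_\alpha=\langle\lambda+\rho,\alpha^\vee\rangle$ and $b_\alpha=\langle\rho,\alpha^\vee\rangle$. Here $b_\alpha\ge 1$, so no denominator vanishes identically. For any integer $m$,
\[
1-z^{2m}=z^{m}\bigl(z^{-m}-z^{m}\bigr)=-z^{m}\bigl(z^{m}-z^{-m}\bigr).
\]
Applying this to the numerator with $m=a_\alpha$ and to the denominator with $m=b_\alpha$, the signs cancel and we get
\[
\frac{1-z^{2a_\alpha}}{1-z^{2b_\alpha}}=z^{a_\alpha-b_\alpha}\,\frac{z^{a_\alpha}-z^{-a_\alpha}}{z^{b_\alpha}-z^{-b_\alpha}}.
\]
By linearity of the pairing, $a_\alpha-b_\alpha=\langle\lambda,\alpha^\vee\rangle$.

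\textbf{Step 2: collect the monomials.} Taking the product over $\Phi^+$, the monomial prefactors combine to
\[
z^{\sum_{\alpha\in\Phi^+}\langle\lambda,\alpha^\vee\rangle}=z^{\langle\lambda,\sum_{\alpha\in\Phi^+}\alpha^\vee\rangle}=z^{2\langle\lambda,\rho^\vee\rangle}.
\]
This exactly cancels the factor $z^{-2\langle\lambda,\rho^\vee\rangle}$ in \Cref{thm dp reductive char}, leaving the claimed product of symmetric ratios.

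\textbf{Main obstacle.} The computation itself is mechanical. The step needing care is the identification $\sum_{\alpha\in\Phi^+}\alpha^\vee=2\rho^\vee$ in the reductive (rather than semisimple) setting. I would check that the $\rho^\vee$ in \Cref{thm dp reductive char} is indeed defined as the half-sum of positive coroots, as in \cite{dptorsion-unipotent}, so that the cancellation holds for every $\lambda\in X^*(T)$, including those with nontrivial central component.

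Finally, as a sanity check I would verify that the resulting expression is invariant under $z\mapsto z^{-1}$, as an $\SL_2$-character must be. Each ratio $(z^{a}-z^{-a})/(z^{b}-z^{-b})$ is even under this substitution, so the product is invariant, consistent with the cancellation in Step 2.
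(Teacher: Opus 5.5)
Your proposal is correct and is essentially the paper's own argument. In both, you factor $z^{\langle\lambda+\rho,\alpha^\vee\rangle}$ out of each numerator factor and $z^{\langle\rho,\alpha^\vee\rangle}$ out of each denominator factor, then note that the leftover exponent $-2\langle\lambda,\rho^\vee\rangle+\sum_{\alpha\in\Phi^+}\langle\lambda,\alpha^\vee\rangle$ vanishes because $\rho^\vee=\frac12\sum_{\alpha\in\Phi^+}\alpha^\vee$. Your restoration of the missing minus signs in the statement also matches the displayed formula in the paper's proof.
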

\begin{proof} We have the character formula
	$$
	\Theta_\lambda(z)=z^{-2\left\langle\lambda, \rho^{\vee}\right\rangle} \frac{\prod_{\alpha \in \Phi^{+}}\left(1-z^{2\left\langle\lambda+\rho, \alpha^{\vee}\right\rangle}\right)}{\prod_{\alpha \in \Phi^{+}}\left(1-z^{2\left\langle\rho, \alpha^{\vee}\right\rangle}\right)}= \prod_{\alpha \in \Phi^{+}} \frac{\left(z^{\left\langle\lambda+\rho, \alpha^{\vee}\right\rangle}-z^{-\left\langle \lambda+\rho, \alpha^{\vee}\right\rangle}\right)}{\left(z^{\left\langle\rho, \alpha^{\vee}\right\rangle}-z^{-\left\langle \rho, \alpha^{\vee}\right\rangle}\right)}.
	$$	
	Factoring out \(z^{\langle \lambda+\rho,\alpha^\vee\rangle}\) from each factor in the numerator and \(z^{\langle \rho,\alpha^\vee\rangle}\) from each factor in the denominator, as \(\alpha\) ranges over \(\Phi^+\), the total power of \(z\) becomes
	 $$-2\left\langle\lambda, \rho^{\vee}\right\rangle + \sum_{\alpha \in \Phi^{+}}   \left\langle\lambda+\rho, \alpha^{\vee}\right\rangle - \sum_{\alpha \in \Phi^{+}}   \left\langle \rho, \alpha^{\vee}\right\rangle =0.$$  The last equality is due to  
	$\rho^{\vee}=\frac{1}{2} \sum_{\alpha \in \Phi^{+}}   \alpha^{\vee}$.
\end{proof}

Recall that the character of $\mathrm{Sym}^j$ of $\mathrm{SL}_2(\mathbb{C})$, evaluated at the diagonal element $\mathrm{diag}(z, z^{-1})$, is given by $\mathrm{ch} \bigl(\mathrm{Sym}^j\bigr) = z^j + z^{j-2} + z^{j-4} + \cdots + z^{-(j-2)} + z^{-j}$.
For brevity, we henceforth identify each representation with its character and write
\begin{equation}\label{eqn sym^j}
	\mathrm{Sym}^j = z^j + z^{j-2} + \cdots + z^{-j}.
\end{equation}
We take the convention that $\mathrm{Sym}^j = 0$ for $j < 0$. Using this notation, we have the following proposition, which we will need later.

\begin{proposition}Let $m$ be an even positive integer. Then
	\begin{equation}\label{eqn: alter char for div 2 }
			\frac{z^{m}-z^{-m}}{z^2-z^{-2}}
			=\sum_{k=0}^{\frac{m-2}{2}} (-1)^{k} \Sym^{m-2-2k}.
	\end{equation}
\end{proposition}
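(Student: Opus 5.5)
The plan is to clear the denominator and prove the equivalent identity
\[
z^{m}-z^{-m}=(z^2-z^{-2})\sum_{k=0}^{\frac{m-2}{2}} (-1)^{k}\,\Sym^{m-2-2k}
\]
by a telescoping argument. Throughout, $\Sym^j$ denotes the Laurent polynomial in \eqref{eqn sym^j}.

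The key input is the elementary identity
\[
(z^2-z^{-2})\,\Sym^{j}=(z-z^{-1})(z+z^{-1})\,\Sym^j .
\]
Since $(z-z^{-1})\Sym^j=z^{j+1}-z^{-(j+1)}$, this becomes
\[
(z^2-z^{-2})\,\Sym^{j}=(z+z^{-1})\bigl(z^{j+1}-z^{-(j+1)}\bigr)=\bigl(z^{j+2}-z^{-(j+2)}\bigr)+\bigl(z^{j}-z^{-j}\bigr).
\]
Writing $D_j:=z^j-z^{-j}$, we therefore have $(z^2-z^{-2})\Sym^j=D_{j+2}+D_j$. Equivalently, $\Sym^j\otimes\Sym^1\otimes\Sym^1$ multiplied by $z-z^{-1}$ collapses in the same way, but the direct computation above suffices.

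Next, multiply the right-hand side of \eqref{eqn: alter char for div 2 } by $z^2-z^{-2}$ and apply this identity with $j=m-2-2k$:
\[
\sum_{k=0}^{\frac{m-2}{2}}(-1)^k\bigl(D_{m-2k}+D_{m-2-2k}\bigr).
\]
This sum telescopes. The term $(-1)^kD_{m-2-2k}$ cancels against $(-1)^{k+1}D_{m-2(k+1)}$. What remains is $D_m$ from $k=0$, together with the final tail term $(-1)^{(m-2)/2}D_0$. Since $D_0=0$, the sum equals $D_m=z^m-z^{-m}$, and dividing by the nonzero Laurent polynomial $z^2-z^{-2}$ gives the claim.

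The only point needing care is the evenness of $m$. It guarantees that the index $j=m-2-2k$ runs through even values down to $0$, so the telescoping ends at $D_0=0$ rather than at $D_1\neq 0$, and that $(m-2)/2$ is an integer. I expect no real obstacle beyond bookkeeping the endpoints of the telescoping sum. One could alternatively induct on $m$ in steps of $2$, using $D_{m+2}=(z^2-z^{-2})\Sym^m-D_m$, which is the same identity.
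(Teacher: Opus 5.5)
Your proof is correct, but it runs in the opposite direction from the paper's.

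The paper divides first. It expands the quotient as the lacunary sum $z^{m-2}+z^{m-6}+\cdots+z^{-(m-2)}$, pairs $z^{j}$ with $z^{-j}$, and rewrites each pair via $z^{j}+z^{-j}=\Sym^{j}-\Sym^{j-2}$ (valid for $j\ge 1$). That pairing implicitly depends on whether a middle term $z^0=\Sym^0$ is left over ($m\equiv 2 \pmod 4$) or not ($m\equiv 0 \pmod 4$), and the paper leaves the sign bookkeeping at the tail to the reader.

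You instead clear the denominator and verify the identity using $(z^2-z^{-2})\Sym^j=D_{j+2}+D_j$. This is the product-to-sum identity $(z+z^{-1})D_{j+1}=D_{j+2}+D_j$, or equivalently Clebsch--Gordan $\Sym^j\otimes\Sym^1=\Sym^{j+1}+\Sym^{j-1}$ multiplied by $z-z^{-1}$. The alternating sum then telescopes to $D_m+(-1)^{(m-2)/2}D_0=D_m$.

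What each route buys: yours is uniform in $m$, needs no case split modulo $4$, and makes the endpoint ($D_0=0$) explicit. The paper's route is more constructive: it shows how the alternating sum arises from the monomial expansion, rather than checking a given answer after multiplying out.

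One small slip, in an aside only. Since $z^2-z^{-2}=(z-z^{-1})\Sym^1$, the relevant product is $(z-z^{-1})\,\Sym^j\otimes\Sym^1$, not $\Sym^j\otimes\Sym^1\otimes\Sym^1$ times $z-z^{-1}$. This does not affect your argument, which uses only the direct computation.
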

\begin{proof}
	Using the identity $\frac{z^{m}-z^{-m}}{z^2-z^{-2}}
	= z^{m-2}+z^{m-6}+z^{m-10}+\cdots+z^{-(m-6)}+z^{-(m-2)},$
	and the relation $z^{j}+z^{-j}=\mathrm{Sym}^{j}-\mathrm{Sym}^{j-2}, j\ge 1,$
	we pair the terms symmetrically about the middle term in the above expansion. Substituting the latter identity into each pair and summing the resulting expressions yields the desired result.
\end{proof}

Before going to state our theorem for $\GL_3( \C)$, we need two lemmas.
\begin{lemma}\label{lem: prod sym with alternate sym}
	Let $m,n \in \Z_{\geq 0}$ and $n$ even.
	Let
	$$P(m,n)=\Sym^m \otimes \left[ \Sym^n -\Sym^{n-2} + \cdots + (-1)^{\frac{n}{2}-1} \Sym^2 + (-1)^{\frac{n}{2}} \Sym^0 \right]  .$$
	Then 
	\begin{enumerate}
		\item If $m \geq n$,
		$$P(m,n)=\Sym^{m+n} + \Sym^{m+n-4}+ \cdots + \Sym^{m-n}$$
		\item If $m < n$ and $m$ even,
		\[
		P(m,n)=
		\begin{cases}
			\displaystyle
			\sum_{k=0}^{\frac{m+n}{4}} \Sym^{m+n-4k}
			-\sum_{k=0}^{\frac{n-m-4}{4}} \Sym^{n-m-2-4k},
			& m+n\equiv 0 \pmod{4},
			\\[1em]
			\displaystyle
			\sum_{k=0}^{\frac{m+n-2}{4}} \Sym^{m+n-4k}
			-\sum_{k=0}^{\frac{n-m-2}{4}} \Sym^{n-m-2-4k},
			& m+n\equiv 2 \pmod{4}.
		\end{cases}
		\]
	\end{enumerate}
\end{lemma}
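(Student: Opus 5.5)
Our plan is to rewrite the alternating bracket as a single quotient and then multiply by $\Sym^m$ directly as Laurent polynomials. By the Proposition with $\frac{z^{m}-z^{-m}}{z^2-z^{-2}}$ (applied with $n+2$ in place of $m$), the bracket equals
\[
A_n:=\frac{z^{n+2}-z^{-(n+2)}}{z^2-z^{-2}} = z^{n}+z^{n-4}+\cdots+z^{-n},
\]
a sum of the $\frac n2+1$ monomials $z^{n-4j}$, $0\le j\le \frac n2$. We also use $\Sym^m=\frac{z^{m+1}-z^{-(m+1)}}{z-z^{-1}}$. Thus
\[
P(m,n)=\sum_{j=0}^{n/2} z^{n-4j}\,\Sym^m=\frac{1}{z-z^{-1}}\sum_{j=0}^{n/2}\bigl(z^{m+1+n-4j}-z^{-(m+1)+n-4j}\bigr).
\]
Writing $z^{-(m+1)+n-4j}=z^{-(m+1+n-4j')}$ with $j'=\frac n2-j$ (using the symmetry $n-4j\mapsto -(n-4j)$ of the index set), this becomes $\sum_{j}\frac{z^{a_j}-z^{-a_j}}{z-z^{-1}}$ with $a_j=m+n-4j+1$. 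Hence
\[
P(m,n)=\sum_{j=0}^{n/2} \Sym^{m+n-4j},
\]
where $\Sym^{k}$ for negative $k$ is understood via the Weyl-type expression $\frac{z^{k+1}-z^{-(k+1)}}{z-z^{-1}}$. This gives $\Sym^{-1}=0$ and $\Sym^{-k-2}=-\Sym^{k}$. This is the classical ``Weyl character formula for a product'' (Brauer–Klimyk) argument, and it is the key step.

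In case (1), $m\ge n$, every index $m+n-4j\ge m-n\ge0$, so the sum is exactly $\Sym^{m+n}+\Sym^{m+n-4}+\cdots+\Sym^{m-n}$, as claimed.

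In case (2), $m<n$ with $m$ even, the indices $m+n-4j$ run through even numbers down to $m-n<0$. The nonnegative ones give $\sum_{k}\Sym^{m+n-4k}$, where $k$ goes up to $\lfloor (m+n)/4\rfloor$; this is $\frac{m+n}{4}$ or $\frac{m+n-2}{4}$ according to $m+n \bmod 4$. For a negative index $-s$ with $s\ge2$ even, we replace $\Sym^{-s}$ by $-\Sym^{s-2}$. The negative indices are $-s$ with $s=4j-m-n$, ranging up to $n-m$ in steps of $4$, so the terms contributed are $-\Sym^{n-m-2},-\Sym^{n-m-6},\dots$, down to the smallest, which is $\Sym^0$ or $\Sym^2$ depending on the residue. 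Counting these gives the stated upper limits $\frac{n-m-4}{4}$ and $\frac{n-m-2}{4}$.

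The main obstacle is the bookkeeping of the reflection $\Sym^{-s}=-\Sym^{s-2}$ together with the exact endpoints in the two residue classes mod $4$. In particular, we must check that when $m+n\equiv 0\pmod 4$ the index $0$ appears positively and the smallest negative index is $-4$, giving $-\Sym^2$. When $m+n\equiv2\pmod4$, the smallest positive index is $2$, and the index $-2$ gives $-\Sym^0$; we must confirm that no index $-1$ arises, which holds since $m+n$ is even. We will verify these endpoints and do a small sanity check such as $m=0$, where $P(0,n)$ must equal the original alternating bracket.
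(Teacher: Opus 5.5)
Your proof is correct, but it takes a different route from the paper's.

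The paper's argument uses only the Clebsch--Gordan rule:
\begin{itemize}
\item For part (1), it pairs the bracket as $(\Sym^{n}-\Sym^{n-2})+(\Sym^{n-4}-\Sym^{n-6})+\cdots$ and uses $\Sym^m\otimes(\Sym^{N}-\Sym^{N-2})=\Sym^{m+N}+\Sym^{m-N}$ for $N\le m$.
\item For part (2), it splits the bracket at the term $\Sym^m$. It telescopes the high terms $\Sym^N$ with $N>m$ in pairs, each pair giving $\Sym^{m+N}-\Sym^{N-m-2}$. It then recognizes the low tail as $P(m,m)$ from part (1). Only the case $m+n\equiv 0\pmod 4$ is written out.
\end{itemize}

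You instead take the Proposition preceding the lemma with $n+2$ in place of $m$, which turns the bracket into the weight sum $z^{n}+z^{n-4}+\cdots+z^{-n}$. You then apply the Brauer--Klimyk rule to get the single identity $P(m,n)=\sum_{j=0}^{n/2}\Sym^{m+n-4j}$, with $\Sym^{-1}=0$ and $\Sym^{-s}=-\Sym^{s-2}$. Your reindexing $j'=\frac n2-j$ is valid. Your endpoint bookkeeping also checks out: for even $m$ one has $n-m\equiv n+m\pmod 4$, so the negative indices are exactly $-4,-8,\dots,-(n-m)$ in the first residue class and $-2,-6,\dots,-(n-m)$ in the second.

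What your route buys is uniformity. Both parts and both residue classes come out of one formula, with no case split at $\Sym^m$, and the same identity would also cover odd $m<n$. It also shows directly where the negative terms in part (2) come from: they are the reflected weights with negative index.

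What the paper's route buys is that it never introduces virtual symbols $\Sym^{k}$ with $k<0$. It works only with the familiar tensor-product rule for genuine representations, at the cost of case-by-case telescoping.
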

\begin{proof} For Part~(1), write the alternating sum $\sum_{k=0}^{\frac{n}{2}} (-1)^k \Sym^{n-2k}$
	by grouping consecutive terms as
$(\Sym^n-\Sym^{n-2})+(\Sym^{n-4}-\Sym^{n-6})+\cdots$.
	Since \(m\geq n\), the Clebsch--Gordan formula gives
	\[
	\Sym^m\otimes (\Sym^n-\Sym^{n-2})
	=
	\Sym^{m+n}+\Sym^{m-n}.
	\]
	Applying the same computation to each of the remaining pairs and summing the resulting expressions, we obtain
	\[
	\left(\Sym^{m+n}+\Sym^{m-n}\right)
	+
	\left(\Sym^{m+n-4}+\Sym^{m-n-4}\right)
	+\cdots .
	\]
	This proves the part(1).
	
	We only prove the case \(m+n \equiv 0 \pmod{4}\); the case \(m+n \equiv 2 \pmod{4}\) follows similarly in part(2). By assumption, \(m\) and \(n\) are both even with \(m<n\). When $k=\frac{n-m}{2}$,
	we have \(n-2k=m\). Hence, we may decompose the tensor product into two parts as follows:
	\begin{align*}
		P(m,n)&=\Sym^m \otimes \left[ \sum_{k=0}^{\frac{n}{2}} (-1)^{k} \Sym^{n-2k} \right] \\
		&=\Sym^m \otimes \left[ \sum_{k=0}^{\frac{n-m}{2}-1}  (-1)^{k} \Sym^{n-2k} + \sum_{k=\frac{n-m}{2}}^{\frac{n}{2}}  (-1)^{k} \Sym^{n-2k}\right] \\
		& =\left[ \displaystyle
		\sum_{k=0}^{\frac{n-m}{4}-1} \Sym^{m+n-4k}
		-\sum_{k=0}^{\frac{n-m}{4}-1} \Sym^{n-m-2-4k} \right]+  \left[ \displaystyle
		\sum_{k=\frac{n-m}{4}}^{\frac{m+n}{4}} \Sym^{m+n-4k} \right]
	\end{align*}
The first bracketed term in the last equality is obtained by taking the tensor product of \(\Sym^m\) with the first alternating sum, while the second bracketed term is obtained by taking the tensor product of \(\Sym^m\) with the second alternating sum. Hence completes the proof.
	 
\end{proof}
\Cref{lem: prod sym with alternate sym} is important to prove the \Cref{thm for gl3}.

For a concise formulation of the lemma and to simplify the notation, we write $[k+1]$ for $\Sym^k$, the $(k+1)$-dimensional irreducible representation of $\SL_2(\C)$. Recall from \eqref{eqn sym^j} that the same notation $[k+1]$ will also be used to denote the character of $\Sym^k$, whenever the context is clear.  Recall $\lfloor m \rfloor$ denotes the floor function of $m$. 
\begin{lemma}\label{lem: sym{l} with sym{l+2} onwards}
	Let $u,v$ be nonnegative integer. Assume that $v$ is odd.
	Consider
	$$Q(u,v)=\Sym^{u}  \otimes \left[ \Sym^{u+2} + \Sym^{u+6} + \cdots +  \Sym^{u+2v}  \right]$$
	Define
	\begin{equation*}
		s =u+v, \qquad
		r=2s+2, \qquad
		p = \Bigl\lfloor \tfrac{s}{2} \Bigr\rfloor \qquad \text{and} \qquad t=\Bigl\lfloor \tfrac{\min\{u,v\}}{2} \Bigr\rfloor.
	\end{equation*}
	
	Then 
	\begin{enumerate}
		\item If $u$ is odd, or if $u$ is even satisfying $u > v - 1$, then  $Q(u,v)$ equals to
				\begin{equation*}
				\sum_{k=1}^{t} k\!\left(
				[4k-1] + [4k+1] + [r-4k+1] + [r-4k+3]
				\right)
				+\;
				(t+1)\!\sum_{j=2t+1}^{s-2t} [2j+1].
			\end{equation*}
		\item If $u$ is even satisfying $u \leq v - 1$, then  $Q(u,v)$ equals to 
		\begin{equation*}
			\sum_{k=1}^{t} k\!\left(
			[4k-1] + [4k+1] + [r-4k+1] + [r-4k+3]
			\right)
			+\;
			t\!\sum_{j=2t+1}^{s-2t} [2j]
			+\;
			\sum_{j=t}^{p-t} [4j+3].
		\end{equation*}
	\end{enumerate}
\end{lemma}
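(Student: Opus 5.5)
The plan is to expand $Q(u,v)$ term by term with the Clebsch--Gordan formula and then count multiplicities. The summands inside the bracket are $\Sym^{u+2+4i}$ for $i=0,1,\dots,\frac{v-1}{2}$; there are $\frac{v+1}{2}$ of them, since $v$ is odd. Each of them has degree $\ge u$, so Clebsch--Gordan gives
\[
\Sym^{u}\otimes \Sym^{u+2+4i}=\sum_{\ell=0}^{u}\Sym^{2u+2+4i-2\ell}=[4i+3]+[4i+5]+\cdots+[2u+4i+3].
\]
Thus $Q(u,v)$ is a sum of $\frac{v+1}{2}$ \emph{strings} of odd-dimensional irreducibles. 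Each string has $u+1$ consecutive odd dimensions, running from $4i+3$ to $2u+4i+3$, and consecutive strings are shifted by $4$. The smallest dimension that occurs is $3$, and the largest is $2u+2v+1=r-1$. This matches the claimed formula, because the target formula also involves only odd dimensions $[2j+1]$ with $1\le j\le s$. In case (2) the term $[2j]$ must be reread; the plan is to check whether it is a typo for $[2j+1]$ by testing small cases such as $u=0,v=1$, where $Q=\Sym^2=[3]$.

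The core step is to compute, for each odd dimension $d=2j+1$, how many strings contain it. This equals the number of integers $i$ with $0\le i\le \frac{v-1}{2}$ and $4i+3\le d\le 2u+4i+3$. Equivalently,
\[
\max\Bigl(0,\Bigl\lceil \tfrac{d-2u-3}{4}\Bigr\rceil\Bigr)\le i\le \min\Bigl(\tfrac{v-1}{2},\Bigl\lfloor \tfrac{d-3}{4}\Bigr\rfloor\Bigr).
\]
This count is a ``trapezoid'' function of $d$: it rises by one every $4$ steps from the bottom, plateaus at height about $\min\{u,v\}/2$, and descends symmetrically at the top. The symmetry is $d\mapsto r-d+2$, which explains the paired terms $[4k-1],[4k+1]$ and $[r-4k+1],[r-4k+3]$ with coefficient $k$. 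I would verify the rising part directly. For $d\in\{4k-1,4k+1\}$ with $k\le t$, the lower bound on $i$ is $0$ and the upper bound is $k-1$, which gives multiplicity $k$. I would obtain the descending part by the symmetry $i\mapsto \frac{v-1}{2}-i$ combined with the dimension reflection.

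The main obstacle is the plateau region $2t+1\le j\le s-2t$ and its parity-dependent height. When $u$ is odd, or when $u$ is even with $u\ge v+1$, the window of admissible $i$ always has exactly $t+1$ elements, which gives $(t+1)\sum[2j+1]$. When $u$ is even with $u\le v-1$, we have $t=u/2$, and the window length $u/4+1$ (in units of $i$ measured against $d/4$) makes the count alternate between $t$ and $t+1$ according to $d \bmod 4$. The dimensions $d\equiv 3\pmod 4$ receive the extra copy, and these are exactly the extra terms $\sum_{j=t}^{p-t}[4j+3]$. Carrying this out means checking the floor/ceiling expressions separately for $d\equiv1$ and $d\equiv 3 \pmod 4$, together with the boundary indices $j=2t+1$ and $j=s-2t$. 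The boundaries are where the case split ``$u>v-1$ versus $u\le v-1$'' enters. As a final check, I would compare total dimensions: $\sum$ of dimensions should equal $(u+1)\sum_i(u+3+4i)$.
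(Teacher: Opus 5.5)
Your proposal is correct and follows the paper's route: Clebsch--Gordan turns each $\Sym^{u}\otimes\Sym^{u+2+4i}$ into the string $[4i+3]+\cdots+[2u+4i+3]$, and one counts overlaps of these strings shifted by $4$. Your explicit window count, intersected with $0\le i\le\frac{v-1}{2}$, is more careful than the paper's sketch, which claims the plateau height is always $\frac{v+1}{2}$; that is true only when $u\ge v-1$.

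You are right that $[2j]$ in part (2) must be $[2j+1]$: for example $Q(2,3)=[3]+[5]+2[7]+[9]+[11]$, and Theorem~\ref{thm for gl3}(2) also uses $[2j+1]$. One slip to correct: the $i$-window has length $u/2$, not $u/4+1$. Its endpoints are integers exactly when $d\equiv 3\pmod 4$, which is why the plateau alternates between $t+1$ and $t$.
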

\begin{proof}For part (1), we show it only when $u$ is odd; the rest can be done similarly.
	Let $Q(k)=\Sym^{u}\otimes \Sym^{u+2k}$, where \(k\) is odd, and let
	$Q(u,v)=\sum_{\substack{1\le k\le v\\ k\ \mathrm{odd}}} Q(k)$.
	By the Clebsch--Gordan formula,
	\[
	Q(k)=\Sym^{2k}\oplus \Sym^{2k+2}\oplus \cdots \oplus \Sym^{2u+2k},
	\]
	and hence \(Q(k)\) contains exactly \(u+1\) irreducible summands for every odd \(k\).
	Therefore, the multiplicities in \(Q(u,v)\) are obtained by counting the overlaps among the strings of consecutive (having gap 2) irreducible summands arising from the various \(Q(k)\). The first summand contributes $\Sym^{2}\oplus \Sym^{4}\oplus \cdots \oplus \Sym^{2u+2}$,
	the second contributes $\Sym^{6}\oplus \Sym^{8}\oplus \cdots \oplus \Sym^{2u+6}$,
	and each subsequent summand is obtained by shifting the entire string four units to the right.
	Consequently, the multiplicity in \(Q(u,v)\) is equal to the number of such strings containing it. As one moves from left to right, the number of overlapping strings increases by one every two terms until all the strings overlap simultaneously. At this stage, the multiplicity attains its maximal value, equal to the number of odd integers between \(1\) and \(v\). Thereafter, the same process occurs in reverse, and the multiplicities decrease symmetrically. This yields the claimed decomposition.
	Part(2) similarly follows.
\end{proof}

\section{Theorem}
 We now come to the main theorem proved in this paper.
 
Let \(G=\GL_3(\C)\) and let \(\{\alpha,\beta\}\) be the set of simple roots, and let \(\{\omega_1,\omega_2\}\) denote the fundamental weights and let $\lambda=a\omega_1+b\omega_2$, where $a,b \in \Z_{\geq 0}$. To decompose $\pi_\lambda\bigl(\operatorname{Sym}^{2}\mathbb C^2\bigr)$ as an $\mathrm{SL}_2(\C)$-module, it is enough to assume $a\ge b\ge0$.
	Since the desired character formula (see \eqref{eqn char formula for gl3}) is symmetric with respect to $a,b$.

 Recall the notation $[k+1]$ for $\Sym^k$, the $(k+1)$-dimensional irreducible representation of $\SL_2(\C)$.
 \begin{theorem}\label{thm for gl3}
Let \(G = \GL_3(\C)\).
Let $\lambda = a\omega_1 + b\omega_2$, for $a\geq b \ge 0$. Define
 	\begin{equation*}
 	s =a+b, \qquad
 	r=2s+2, \qquad
 	p = \Bigl\lfloor \tfrac{s}{2} \Bigr\rfloor,  \qquad \text{and} \qquad t=\Bigl\lfloor \tfrac{b}{2} \Bigr\rfloor.
 	\end{equation*}
 	
 	If  $b =0$, then $\pi_{\lambda}(\Sym^2)= \sum_{k=0}^{p} [2a-4k+1].$
 	
 	Suppose $b\geq1$. 
 	\begin{enumerate}
 		\item If $b$ is odd, then $\pi_{\lambda}(\Sym^2)$ equals to
 		\begin{equation*}
 			\sum_{k=1}^{t} k\!\left(
 			[4k-1] + [4k+1] + [r-4k+1] + [r-4k+3]
 			\right)
 			+\;
 			(t+1)\!\sum_{j=2t+1}^{s-2t} [2j+1].
 		\end{equation*}
 		
 		\item 
 		If $(a,b) \equiv (\text{odd},\text{even})$
 		Then $\pi_{\lambda}(\Sym^2)$ equals to
 		\begin{equation*}
 			\sum_{k=1}^{t} k\!\left(
 			[4k-1] + [4k+1] + [r-4k+1] + [r-4k+3]
 			\right)
 			+\;
 			t\!\sum_{j=2t+1}^{s-2t} [2j+1]
 			+\;
 			\sum_{j=t}^{p-t} [4j+3].
 		\end{equation*}

 		\item 
 		Suppose $(a,b) \equiv (\text{even},\text{even})$.
 		Then $\pi_{\lambda}(\Sym^2)$ equals to
 		\begin{equation*}
 			\sum_{k=1}^{t-1} k\!\left(
 			[4k+1] + [4k+3] + [r-4k+1] + [r-4k+3]
 			\right)
 			+\;
 			t\!\sum_{j=2t}^{s-2t+2} [2j+1]
 			+\;
 			\sum_{k=0}^{\frac{s-2t}{2}} [4k+1].
 		\end{equation*}
 	\end{enumerate}
 \end{theorem}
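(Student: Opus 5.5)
We start from \Cref{prop dp char} specialised to $\GL_3(\C)$. The positive roots are $\alpha,\beta,\alpha+\beta$, and with $\lambda=a\omega_1+b\omega_2$ we have $\langle\lambda+\rho,\alpha^\vee\rangle=a+1$, $\langle\lambda+\rho,\beta^\vee\rangle=b+1$, $\langle\lambda+\rho,(\alpha+\beta)^\vee\rangle=a+b+2$, while the corresponding values for $\rho$ are $1,1,2$. The character therefore takes the form
\begin{equation*}
\Theta_\lambda(z)=\frac{z^{a+1}-z^{-(a+1)}}{z-z^{-1}}\cdot\frac{z^{b+1}-z^{-(b+1)}}{z-z^{-1}}\cdot\frac{z^{a+b+2}-z^{-(a+b+2)}}{z^2-z^{-2}}
=\Sym^a\otimes\Sym^b\otimes \frac{z^{s+2}-z^{-(s+2)}}{z^2-z^{-2}},
\end{equation*}
which is symmetric in $a,b$; this justifies the reduction to $a\ge b$. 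The plan is to expand the last factor and then multiply out using Clebsch--Gordan together with the two lemmas.

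\textbf{Case $s$ even.} Here $s+2$ is even, so \eqref{eqn: alter char for div 2 } gives $\frac{z^{s+2}-z^{-(s+2)}}{z^2-z^{-2}}=\sum_{k=0}^{s/2}(-1)^k\Sym^{s-2k}$. The first step is to combine this alternating sum with one of the factors $\Sym^a$ or $\Sym^b$ via \Cref{lem: prod sym with alternate sym}, taking $n=s$ and $m$ equal to $a$ or $b$. Since $m\le s$, part (2) of that lemma applies, or part (1) when $b=0$. Part (2) requires $m$ even, so:
\begin{itemize}
\item for $(a,b)$ both even we use $m=b$;
\item for $(a,b)$ both odd we reorganise differently, e.g.\ factor $\Sym^a\otimes\Sym^b$ by Clebsch--Gordan first, or handle it by direct pairing as in the proof of \Cref{lem: prod sym with alternate sym}.
\end{itemize}
When $b=0$, part (1) with $m=n=a$ gives $\sum_k \Sym^{2a-4k}$ at once, which is the claimed $b=0$ formula.

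\textbf{Case $s$ odd.} Then $s+2$ is odd and \eqref{eqn: alter char for div 2 } does not apply directly. Instead we use the expansion $\frac{z^{s+2}-z^{-(s+2)}}{z^2-z^{-2}}=z^{s}+z^{s-4}+\cdots+z^{-s}$, whose exponents step by $4$, and rewrite it as a signed sum of $\Sym^j$ via $z^j+z^{-j}=\Sym^j-\Sym^{j-2}$. Alternatively, we can multiply first by $\Sym^b$ (with $b$ odd or even as appropriate), collapsing the product into a sum of the shape $\Sym^{u+2}+\Sym^{u+6}+\cdots$. After the first multiplication, the result should be a positive combination $\sum \Sym^{c+2}+\Sym^{c+6}+\cdots$ (strings with gap $4$) minus a small correction. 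Tensoring this with the remaining factor is then exactly of the form $Q(u,v)$ in \Cref{lem: sym{l} with sym{l+2} onwards}, with $u=b$ or $u=b-1$ and $v$ roughly $a$. Its parts (1) and (2) correspond to the parity splits ($b$ odd versus $(a,b)\equiv$(odd, even)), which explains the identical shapes of the formulas in Theorem parts (1) and (2) and the lemma.

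The remaining step is to match the result with the stated formulas. For each case we identify parameters in \Cref{lem: sym{l} with sym{l+2} onwards} so that $u+v=s$ and $\lfloor\min\{u,v\}/2\rfloor=\lfloor b/2\rfloor=t$. The leftover negative terms coming from part (2) of \Cref{lem: prod sym with alternate sym} should cancel against boundary terms, leaving nonnegative multiplicities. The main obstacle is the bookkeeping in case (3), $(a,b)$ both even. There the pattern shifts: the index runs from $k=1$ to $t-1$, and the extra string $\sum[4k+1]$ appears, because the multiplicity plateau is reached one step earlier than in the other cases. To control this, we first verify the formula on a dimension count, since the total dimension must equal $\frac{(a+1)(b+1)(a+b+2)}{2}$. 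We then recount multiplicities directly using the overlapping-strings argument from the proof of \Cref{lem: sym{l} with sym{l+2} onwards}. Throughout, the guiding principle is that each tensor product with $\Sym^b$ smears a string of gap $4$ into overlapping blocks of width $b+1$.
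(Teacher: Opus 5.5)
There is a genuine gap in your treatment of $s=a+b$ odd. For odd $m$ the quotient $\frac{z^{m}-z^{-m}}{z^2-z^{-2}}$ is not a Laurent polynomial: at $z=i$ the denominator vanishes, while $i^m-i^{-m}=\pm 2i\neq 0$. So the expansion $z^{s}+z^{s-4}+\cdots+z^{-s}$ you invoke does not exist, since stepping down by $4$ from an odd $s$ never reaches $-s$. Already for $s=1$,
\[
\frac{z^3-z^{-3}}{z^2-z^{-2}}=\frac{z^2+1+z^{-2}}{z+z^{-1}}.
\]
Your factorization $\Sym^a\otimes\Sym^b\otimes\frac{z^{s+2}-z^{-(s+2)}}{z^2-z^{-2}}$ is therefore a product of virtual characters only when $s$ is even. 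The whole $s$ odd branch fails, including $b=0$ with $a$ odd.

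The missing idea, which is the core of the paper's proof, is to pair $z^2-z^{-2}$ with whichever of $a+1$, $b+1$, $a+b+2$ is even. That choice, not the parity of $s$, produces the three cases of the theorem.
\begin{itemize}
\item If $b$ is odd, then $\frac{z^{b+1}-z^{-(b+1)}}{z^2-z^{-2}}=\Sym^{b-1}-\Sym^{b-3}+\cdots$. Part (1) of \Cref{lem: prod sym with alternate sym} with $m=a+b+1\ge n=b-1$ gives $\Sym^{a+b+1}\otimes\bigl(\Sym^{b-1}-\Sym^{b-3}+\cdots\bigr)=\Sym^{a+2}+\Sym^{a+6}+\cdots+\Sym^{a+2b}$. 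Tensoring with $\Sym^a$ gives $Q(a,b)$, so $u=a$ and $v=b$ in the $Q(u,v)$ lemma. With $a$ even, your choice $v\approx a$ would violate that lemma's hypothesis that $v$ is odd.
\item If $a$ is odd and $b$ is even, the same argument with $a+1$ gives $Q(b,a)$.
\end{itemize}
Your ``alternatively, multiply first by $\Sym^b$'' only works when $b$ is odd. In the (odd, even) case it is $\Sym^a$ that must be absorbed first.

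Your split by the parity of $s$ also leaves the both-odd case unresolved. It belongs to part (1), and you defer it to an unspecified reorganisation with cancellations. Pairing with $b+1$ handles it by exactly the same $Q(a,b)$ computation as the case $b$ odd, $a$ even, with no cancellation at all.

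For the even--even case, your route through part (2) of \Cref{lem: prod sym with alternate sym} with $m=b$, $n=s$ is viable. It differs from the paper, which applies Clebsch--Gordan to $\Sym^a\otimes\Sym^b$ first, pairs consecutive summands, and tensors each pair and the leftover $\Sym^{a-b}$ with $W$. As written, however, your route is not an argument: a dimension count cannot certify a decomposition, and ``recount multiplicities'' is not a computation. What you need to observe is that the negative string merges with the tail of the positive one, giving
\[
\Sym^b\otimes\frac{z^{s+2}-z^{-(s+2)}}{z^2-z^{-2}}=\sum_{k=1}^{t}\Sym^{a+4k}+\frac{z^{a+2}-z^{-(a+2)}}{z^2-z^{-2}}.
\]
Tensoring with $\Sym^a$ and applying part (1) of the lemma then yields the manifestly positive expression $\sum_{k=1}^{t}\Sym^a\otimes\Sym^{a+4k}+\sum_{i=0}^{a/2}\Sym^{2a-4i}$. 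This must then be matched term by term with part (3).
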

 
 \begin{proof}
The set of positive roots of \( \GL_3(\C)\) is $\Phi^+ = \{\alpha, \beta, \alpha + \beta\}$,
where \(\alpha\) and \(\beta\) are the simple roots. The corresponding coroots are $(\Phi^+)^\vee = \{\alpha^\vee, \beta^\vee, \alpha^\vee + \beta^\vee\}$,
since \((\alpha + \beta)^\vee = \alpha^\vee + \beta^\vee\) due to the simply-laced root system of $\GL_3(\C)$.
Let \(\omega_1, \omega_2\) denote the fundamental weights. Let $\lambda = a\omega_1 + b\omega_2$, for $a,b \ge 0$, be a dominant integral weight.
We have $\rho = \omega_1 + \omega_2$ and hence $\lambda + \rho = (a+1)\omega_1 + (b+1)\omega_2$.
Using the standard pairing \(\langle \omega_i, \alpha_j^\vee \rangle = \delta_{ij}\), we compute
\[
\langle \rho, \alpha^\vee \rangle = 1, 
\qquad
\langle \rho, \beta^\vee \rangle = 1, 
\qquad
\langle \rho, (\alpha+\beta)^\vee \rangle = 2,
\]
\[
\langle \lambda + \rho, \alpha^\vee \rangle = a+1, 
\qquad
\langle \lambda + \rho, \beta^\vee \rangle = b+1
\qquad
\langle \lambda + \rho, (\alpha+\beta)^\vee \rangle = a+b+2.
\]
Hence by \Cref{prop dp char}, the character 
\begin{equation}\label{eqn char formula for gl3}
	\Theta_{\lambda}(z)=	\frac{\left(z^{a+b+2}-z^{-(a+b+2)}\right) \left(z^{b+1}-z^{-(b+1)} \right) \left(z^{a+1}-z^{-(a+1)} \right)}{(z-z^{-1})(z-z^{-1})(z^2-z^{-2})}.
\end{equation}
 
We reorganize the above expression by pairing each denominator factor with a suitable numerator factor, thereby expressing the entire character as a product of three $\SL_2(\C)$-characters(or virtual character).
For the factor $(z^2-z^{-2})$, we pair it with a numerator term of the form $z^{m}-z^{-m}$ where $m$ is even. Among the three integers $a+1,b+1$ and $a+b+2$,
at least one is even for any parity of \(a,b \in \mathbb{Z}_{\ge 0}\). 

Suppose \(b=0\). Then according to the parity of \(a\), character simplifies to
\[
\Theta_{\lambda}(z)
=
\frac{(z^{a+2}-z^{-(a+2)})(z^{a+1}-z^{-(a+1)})}
{(z-z^{-1})(z^2-z^{-2})}=\begin{cases}\frac{z^{a+2}-z^{-(a+2)}}{z^2-z^{-2}}
\cdot \Sym^a & \text{ if } a \text{ is even},\\[.5em]
\frac{z^{a+1}-z^{-(a+1)}}{z^2-z^{-2}}
\cdot \Sym^{a+1}  & \text{ if } a \text{ is odd}.
 \end{cases}.
\]
Therefore, the desired conclusion follows from Part~(1) of \Cref{lem: prod sym with alternate sym}.

Suppose \(b \geq 1\). 
Depending on the parity of \(a\) and \(b\), the character \(\Theta_{\lambda}(z)\) can be expressed in the form
 	\begin{equation*}
 		\Theta_{\lambda}(z)=
 		\frac{z^{m_1}-z^{-m_1}}{z-z^{-1}}
 		\cdot
 		\frac{z^{m_2}-z^{-m_2}}{z-z^{-1}}
 		\cdot
 		\frac{z^{m_3}-z^{-m_3}}{z^2-z^{-2}},
 	\end{equation*}
 	where the parameters \((m_1,m_2,m_3)\) are chosen according to the parity conditions:
 	\[
 	(m_1,m_2,m_3)=
 	\begin{cases}
 		(a+1,a+b+2,b+1), & \text{if } b \text{ is odd},\\
 		(b+1,a+b+2,a+1), & \text{if } b \text{ is even and } a \text{ is odd},\\
 		(a+1,b+1,a+b+2), & \text{if both } a \text{ and } b \text{ are even}.
 	\end{cases}
 	\]
 	Since \(m_3\) is always even, the last factor admits an expansion as an alternating sum of symmetric powers by \eqref{eqn: alter char for div 2 }. Hence
 	\[
 	W:=\frac{z^{m_3}-z^{-m_3}}{z^2-z^{-2}}=\mathrm{Sym}^{m_3-2}
 	-\mathrm{Sym}^{m_3-4}
 	+\cdots
 	+(-1)^{\frac{m_3-4}{2}}\mathrm{Sym}^{2}
 	+(-1)^{\frac{m_3-2}{2}}\mathrm{Sym}^{0}.
 	\]
 Thus, the problem reduces to understanding the tensor product of two symmetric powers together with the alternating sum \(W\). More precisely, we rewrite \(\Theta_{\lambda}(z)\) as
 	\begin{equation}\label{eq three product}
 		\Theta_{\lambda}(z)
 		=
 		\mathrm{Sym}^{m_1-1}
 		\otimes
 		\mathrm{Sym}^{m_2-1}
 		\otimes
 		W.
 	\end{equation}
 	
  The rest of the proof is essentially identical in all cases except when both \(a\) and \(b\) are even. We therefore treat this exceptional case separately, and first establish the result for the remaining two cases. 
  In the first two cases, we first consider the tensor product $\Sym^{m_2-1} \otimes W$
  and then tensor each resulting term with \(\Sym^{m_1-1}\).
  However, in the even-even case, we proceed differently: we first compute $\Sym^{m_1-1} \otimes \Sym^{m_2-1}$ and then tensor each resulting term with \(W\).
  
  For the case when \(b\) is odd, applying part~(1) of Lemma~\ref{lem: prod sym with alternate sym} yields the decomposition of $\mathrm{Sym}^{m_2-1} \otimes W$.
  Substituting this into \eqref{eq three product}, we obtain the desired character:
  \begin{equation}\label{eqn b is odd}
  	\Theta_{\lambda}(z)
  	= \mathrm{Sym}^{a} \otimes \left[\mathrm{Sym}^{a+2} 
  	+ \mathrm{Sym}^{a+6} 
  	+ \cdots 
  	+ \mathrm{Sym}^{a+2b}\right].
  \end{equation}
  The desired conclusion in this case now follows from part~(1) of \Cref{lem: sym{l} with sym{l+2} onwards}. For the case when \(a\) is odd and \(b\) is even, the argument proceeds similarly. In this case, substituting the decomposition of $\mathrm{Sym}^{m_2-1} \otimes W$ into \eqref{eq three product} gives \begin{equation}\label{eqn a odd b even} \Theta_{\lambda}(z) = \mathrm{Sym}^{b} \otimes \left[\mathrm{Sym}^{b+2} + \mathrm{Sym}^{b+6} + \cdots + \mathrm{Sym}^{b+2a}\right]. \end{equation} The result therefore follows from part~(2) of \Cref{lem: sym{l} with sym{l+2} onwards}.
 
 For the even-even case, applying the Clebsch--Gordan formula gives the decomposition of $\Sym^{m_1-1} \otimes \Sym^{m_2-1}$.
 Substituting this into \eqref{eq three product}, we obtain
 \begin{align*}
 	\Theta_{\lambda}(z)
 	&=\left[ \Sym^{a+b} + \Sym^{a+b-2} + \cdots  + \Sym^{a-b} \right]\otimes W \\
 	&=\left[\left( \Sym^{a+b} + \Sym^{a+b-2}\right) + \cdots + \left( \Sym^{a-b-4} + \Sym^{a-b-2}\right) + \Sym^{a-b} \right]\otimes W .
 \end{align*}
 We group the terms as above and tensor each pair with \(W\). The remaining unpaired term is \(\Sym^{s-4t}\), which is then tensored separately with \(W\).
   Summing the contributions from the paired terms after tensoring with \(W\), we obtain 
  \begin{align}
  &	\left[\left( \Sym^s + \Sym^{s-2}\right) + \cdots + \left( \Sym^{s-4t+4} + \Sym^{s-4t+2}\right)\right]\otimes W \notag \\
  =&\left[  \sum_{k=1}^{t-1} k\!\left(
  [4k-1] + [4k+1] 
  \right) 
  +\;
  t\!\sum_{j=2t-1}^{s-2t+2} [2j+1]\right] +\; \sum_{k=1}^{t-1} k\!\left(
  	[r-4k+1] + [r-4k+3]
  	\right) \label{eq even even 2nd step}
  \end{align}
   We use the notation $[k+1]$ for $\Sym^k$, in the last step. The remaining unpaired term contributes
  \begin{align} \label{eq even even another eqn}
  	\Sym^{s-4t} \otimes W=\sum_{j\geq 0} \Sym^{2s-4t-4j} + \left[ - \sum_{j\geq 0} \Sym^{4t-2-4j} \right] 
  \end{align}
Adding \eqref{eq even even 2nd step} and \eqref{eq even even another eqn}, we obtain the desired result. The unbracketed terms in the above two expressions remain unchanged, while summing the bracketed terms (after rearranging) 
\begin{align*}
=&	\left(  \sum_{k=1}^{t-1} k\!\left(
	[4k-1] + [4k+1] 
	\right) + t [4t-1]  -  \sum_{j\geq 0} [4t-1-4j]
	 \right) +\;
	 t\!\sum_{j=2t}^{s-2t+2} [2j+1]\\
	 =&\sum_{k=1}^{t-1} k\!\left(
	 [4k+1] + [4k+3] 
	 \right) +\;
	 t\!\sum_{j=2t}^{s-2t+2} [2j+1]
\end{align*} 
This completes the proof.
\end{proof} 
 
If  $\pi_{\lambda}$ restricted to principal $\SL_2(\C)$ is $\sum k_i [i]$ then we call $\sum k_i$ be the sum of  multiplicities.
 \begin{corollary}
 	The sum of the multiplicities of the Jordan sizes given in Theorem~\ref{thm for gl3} is 
 	$$M=\begin{cases}
 		\frac{(a+1)(b+1) +1}{2} & \text{ if }  a,b \text{ both even} \\
 		\frac{(a+1)(b+1)}{2} & \text{ otherwise } . 
 	\end{cases}$$
 	In particular, an irreducible representation of $\GL_3(\C)$ restricted to the principal $\SL_2(\C)$ is irreducible if and only if $(a,b)=(1,0)$ or $(0,1)$, thus if and only if the representation $\pi_{\lambda}$ is either the standard $3$ dimensional representation of $\GL_3(\C)$ or it's dual.
 \end{corollary}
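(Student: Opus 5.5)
The plan is to avoid summing the three case-by-case formulas of \Cref{thm for gl3} term by term. Instead I would use a general fact about finite-dimensional $\SL_2(\C)$-modules: the number of irreducible summands equals $\dim V_0+\dim V_1$, where $V_j$ is the $z^j$-weight space. This holds because each $\Sym^j$ contributes exactly one weight in $\{0,1\}$.

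\textbf{Step 1: only even weights occur.} The principal $\SL_2$ is $\Sym^2$, so $\mathrm{diag}(z,z^{-1})$ maps to $\mathrm{diag}(z^2,1,z^{-2})\in\GL_3(\C)$. A $\GL_3$-weight $\mu=(\mu_1,\mu_2,\mu_3)$ of $\pi_\lambda$ is therefore sent to the $\SL_2$-weight $z^{2(\mu_1-\mu_3)}$. All $\SL_2$-weights are even, so $V_1=0$ and $M=\dim V_0$. Moreover $\dim V_0$ is the total multiplicity of the $\GL_3$-weights of $\pi_\lambda$ with $\mu_1=\mu_3$.

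\textbf{Step 2: count via Gelfand--Tsetlin patterns.} Take top row $(a+b,b,0)$, middle row $(x,y)$ with $a+b\ge x\ge b\ge y\ge 0$, and bottom entry $w$ with $x\ge w\ge y$. The weight of such a pattern is $(w,\,x+y-w,\,a+b-x-y)$. The condition $\mu_1=\mu_3$ forces $w=a+b-x-y$. Hence $w$ is determined by $(x,y)$, and the pattern exists iff $y\le a+b-x-y\le x$, that is,
\[
2x+y\ge a+b,\qquad x+2y\le a+b .
\]
So $M$ is the number of lattice points $(x,y)\in[b,a+b]\times[0,b]$ satisfying these two inequalities.

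\textbf{Step 3: evaluate the count.} This is the main obstacle. The box $[b,a+b]\times[0,b]$ contains $(a+1)(b+1)$ points. I would look for an involution $\iota$ of the box with two properties: $\iota$ swaps the points satisfying both inequalities with those violating at least one of them, and $\iota$ has a single fixed point exactly when $a,b$ are both even. That would give $M=\tfrac{(a+1)(b+1)}{2}$, respectively $\tfrac{(a+1)(b+1)+1}{2}$.

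A natural candidate comes from the change of variables $x'=x-b\in[0,a]$. The conditions become $2x'+y\ge a-b$ and $x'+2y\le a-b+b=a$, and I would try the central reflection $(x',y)\mapsto(a-x',b-y)$, adjusted along the boundary lines.

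If finding the involution proves messy, I would fall back on one of two alternatives. The first is to slice by $y$: count admissible $x'$ for each fixed $y$ and sum, splitting by the parities of $a$ and $b$. The second is to sum the multiplicities directly from the three cases of \Cref{thm for gl3}. For instance, in case (1) that sum is $4\sum_{k=1}^{t}k+(t+1)(s-4t)$ with $b=2t+1$, which simplifies to $(a+1)(t+1)=\tfrac{(a+1)(b+1)}{2}$. The $b=0$ case gives $p+1$, which matches both formulas. These computations also serve as a check on the lattice-point count.

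\textbf{Step 4: the irreducibility criterion.} The restriction is irreducible iff $M=1$. If $a,b$ are not both even, this means $(a+1)(b+1)=2$, i.e.\ $(a,b)\in\{(1,0),(0,1)\}$, which are the standard representation and its dual. If $a,b$ are both even, it means $(a+1)(b+1)=1$, i.e.\ the trivial (one-dimensional) representation. That case has to be excluded, as implicitly intended in the statement.
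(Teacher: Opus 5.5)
Step~1 is correct and is a nice reduction ($M=\dim V_0$). However, Step~2 contains a slip that makes the lattice count wrong, and Step~3, which carries all the content, is never done. The top row $(a+b,b,0)$ sums to $a+2b$, not $a+b$, so the weight of your pattern is $(w,\,x+y-w,\,a+2b-x-y)$. The correct conditions are therefore $2x+y\ge a+2b$ and $x+2y\le a+2b$, i.e.\ $2x'+y\ge a$ and $x'+2y\le a+b$ for $x'=x-b\in[0,a]$. Your inequalities already fail for $(a,b)=(2,2)$: they give $4$ points, while $\pi_\lambda|_{\SL_2}=\Sym^0+2\Sym^4+\Sym^6+\Sym^8$ has $5$ summands. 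The plain central reflection $(x',y)\mapsto(a-x',b-y)$ also does not exchange the admissible set with its complement. With the corrected inequalities it maps it onto $\{2x'+y\le a+b,\ x'+2y\ge b\}$; for $(a,b)=(2,2)$ it swaps the two admissible points $(0,2)$ and $(2,0)$. You never say what the ``adjustment'' is. Your fallback of summing the multiplicities in Theorem~\ref{thm for gl3} is essentially the paper's proof, which counts $(u+1)\frac{v+1}{2}$ Clebsch--Gordan constituents in \eqref{eqn b is odd} and \eqref{eqn a odd b even}. But you only did case (1) and $b=0$. Cases (2) and (3) still need the analogous sums; they come to $t(s-2t)+\frac{s+1}{2}$ and $t(s-2t)+t+\frac{s-2t}{2}+1$, which do equal the claimed values.

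The weight-space route is worth repairing, since it is genuinely different from the paper's. It avoids the case analysis entirely, including the even--even case, where $W$ is only a virtual character and a plain term count does not apply. With the corrected inequalities, the complement of the admissible set in $[0,a]\times[0,b]$ is the disjoint union of $\{2x'+y<a\}$ and $\{x'+2y>a+b\}$; a point in both would satisfy $y-x'>b$, which is impossible. Slicing by $y$ and using $a\ge b$, these two sets have $\sum_{y=0}^{b}\lfloor\frac{a+1-y}{2}\rfloor=\lfloor\frac{(a+1)^2}{4}\rfloor-\lfloor\frac{(a-b)^2}{4}\rfloor$ and $\sum_{b/2<y\le b}(2y-b)=\lfloor\frac{(b+1)^2}{4}\rfloor$ points respectively. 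Hence
\[
M=(a+1)(b+1)-\Bigl\lfloor\frac{(a+1)^2}{4}\Bigr\rfloor-\Bigl\lfloor\frac{(b+1)^2}{4}\Bigr\rfloor+\Bigl\lfloor\frac{(a-b)^2}{4}\Bigr\rfloor .
\]
Since $\lfloor N^2/4\rfloor$ is $N^2/4$ or $N^2/4-\frac14$ according as $N$ is even or odd, this equals $\frac{(a+1)(b+1)}{2}$, plus $\frac14$ for each even number among $a,b$, minus $\frac14$ if $a-b$ is odd. That is exactly the stated formula, uniformly in all parity cases and without Theorem~\ref{thm for gl3}. Your Step~4 remark is also right: $(a,b)=(0,0)$ gives $M=1$, so the paper's ``only if'' should exclude the trivial representation.
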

 \begin{proof}
 	Note that the total multiplicity is the same as counting how many terms are there in \eqref{eqn b is odd} or \eqref{eqn a odd b even}. Tensor product $\Sym^{u}\otimes \Sym^{u+2k}$ contains $u+1$ many components. In those two equations, $k$ is an odd integer that lies between 1 and $v$, where $v$ is an odd integer. So total components is $(u+1)\frac{v+1}{2}$. For even case one counts similarly.
 \end{proof}

 \begin{remark}\label{rem: palindromic}
 	The multiplicity sequence of Jordan sizes appearing in the decomposition depends on the parity of $a$ and $b$ of the highest weight $\lambda=a \omega_1 +b \omega_2$.
 	When $b$ is odd (Figure~\ref{fig:mult_seq_odd_even}:(B)), the sequence exhibits a particularly regular pattern: it starts at $1$, each value appears exactly twice, increases stepwise to a maximum $m$, remains constant for a plateau, and then decreases symmetrically. In particular, the multiplicity sequence is palindromic and has the schematic form
 	\[
 	\underbrace{1,1},\;\underbrace{2,2},\;\ldots,\;\underbrace{m-1,m-1},\;
 	\underbrace{m,\ldots,m}_{\text{plateau}},\;
 	\underbrace{m-1,m-1},\;\ldots,\;\underbrace{2,2},\;\underbrace{1,1}.
 	\]
 	
 	When $a$ is odd and $b$ is even, the sequence remains palindromic and largely follows the same pattern. The only distinction arises at the plateau, it is replaced by an alternating two-level pattern (Figure~\ref{fig:mult_seq_odd_even}:(A)) at the maximum multiplicity. Note that when both $a$ and $b$ are even, the result is not well-structured.
 \end{remark}

 \begin{figure}[h]
 	\centering
 	\begin{subfigure}[b]{0.55\textwidth}
 		\centering
 		\begin{tikzpicture}[scale=0.45]
 			\def\seq{{1,1,2,2,3,3,4,3,4,3,4,3,3,2,2,1,1}}
 			\foreach \i in {0,...,16} {
 				\pgfmathsetmacro{\val}{\seq[\i]}
 				\pgfmathsetmacro{\xpos}{\i * 0.7}
 				\draw[fill=blue!30, draw=blue!60] (\xpos, 0) rectangle (\xpos+0.6, \val);
 			}
 			\draw[->] (-0.3, 0) -- (12.5, 0) node[right] {\shortstack{Jordan\\size}};
 			\draw[->] (0, -0.3) -- (0, 5.5) node[above] {\shortstack{multiplicity of \\ Jordan size}};
 			\foreach \y in {1,2,3,4} {
 				\draw (0.05,\y) -- (-0.15,\y) node[left] {\small\y};
 			}
			\foreach \x in {0.3,1.0,1.7,2.4,3.1,3.8,4.5,5.2,5.9,6.6,7.3,8.0,8.7,9.4,10.1,10.8,11.5}
			{
				\draw (\x,0.05) -- (\x,-0.15);
			}
			
			\foreach \x/\lab in {
				0.3/3,
				1.7/7,
				3.1/11,
				4.5/15,
				5.9/19,
				7.3/23,
				8.7/27,
				10.1/31,
				11.5/35
			}
			{
				\node[below] at (\x,-0.15) {\small\lab};
			}
 			\draw[decorate, decoration={brace, amplitude=5pt, mirror}, yshift=-1.5cm]
 			(0,0) -- (4.1,0) node[midway, below=6pt] {\small rising};
 			\draw[decorate, decoration={brace, amplitude=5pt, mirror}, yshift=-1.5cm]
 			(4.2,0) -- (8.3,0) node[midway, below=6pt] 
 			{\small \begin{tabular}{c}alternating\\plateau\end{tabular}};
 			\draw[decorate, decoration={brace, amplitude=5pt, mirror}, yshift=-1.5cm]
 			(8.4,0) -- (11.9,0) node[midway, below=6pt] {\small \quad descending};
 			\draw[<->, dashed, gray] (0.3, 5.1) -- (11.6, 5.1)
 			node[midway, above] {\small palindromic};
 		\end{tikzpicture}
 		\caption{$a = 11$ (odd), $b = 6$ (even)}
 		\label{fig:mult_seq_alternating}
 	\end{subfigure}
 	\hfill
 	\begin{subfigure}[b]{0.42\textwidth}
 		\centering
 		\begin{tikzpicture}[scale=0.45]
 			\def\seq{{1,1,2,2,3,3,3,2,2,1,1}}
 			\foreach \i in {0,...,10} {
 				\pgfmathsetmacro{\val}{\seq[\i]}
 				\pgfmathsetmacro{\xpos}{\i * 0.7}
 				\draw[fill=blue!30, draw=blue!60] (\xpos, 0) rectangle (\xpos+0.6, \val);
 			}
 			\draw[->] (-0.3, 0) -- (8.5, 0) node[right] {\shortstack{Jordan\\size}};
 			\draw[->] (0, -0.3) -- (0, 5) node[above] {\shortstack{multiplicity of \\ Jordan size}};
 			\foreach \y in {1,2,3} {
 				\draw (0.05,\y) -- (-0.15,\y) node[left] {\small\y};
 			}
 			\foreach \x in {0.3,1.0,1.7,2.4,3.1,3.8,4.5,5.2,5.9,6.6,7.3}
 			{
 				\draw (\x,0.05) -- (\x,-0.15);
 			}
 			
 			\foreach \x/\lab in {
 				0.3/3,
 				1.7/7,
 				3.1/11,
 				4.5/15,
 				5.9/19,
 				7.3/23
 			}
 			{
 				\node[below] at (\x,-0.15) {\small\lab};
 			}
 			\draw[decorate, decoration={brace, amplitude=5pt, mirror}, yshift=-1.5cm]
 			(0,0) -- (2.7,0) node[midway, below=6pt] {\small rising};
 			\draw[decorate, decoration={brace, amplitude=5pt, mirror}, yshift=-1.5cm]
 			(2.8,0) -- (4.8,0) node[midway, below=6pt] {\small \begin{tabular}{c}flat\\plateau\end{tabular}};
 			\draw[decorate, decoration={brace, amplitude=5pt, mirror}, yshift=-1.5cm]
 			(4.9,0) -- (7.6,0) node[midway, below=6pt] {\small \qquad descending};
 			\draw[<->, dashed, gray] (0.3, 4.1) -- (7.3, 4.1)
 			node[midway, above] {\small \quad palindromic};
 		\end{tikzpicture}
 		\caption{$a = 6$ (even), $b = 5$ (odd)}
 		\label{fig:mult_seq_flat}
 	\end{subfigure}
 	\caption{(A) alternating two-level plateau, and (B) flat plateau.}
 	\label{fig:mult_seq_odd_even}
 \end{figure}

\vspace{.5cm}
\noindent{\bf Acknowledgment:} The author thanks Prof. Dipendra Prasad for suggesting this problem and for many fruitful discussions. The author is also grateful to the participants of the Algebraic Groups Seminar on \cite{nilpotent-orbits-book} at IIT Bombay for their valuable discussions, which greatly helped the understanding of this work. The author thanks Prof. Santosha Pattanayak for useful discussions.


\end{document}